\newcommand{\End}{\operatorname{End}}
\newcommand{\Id}{\operatorname{Id}}
\newcommand{\adj}{\operatorname{adj}}
\renewcommand{\lim}{\varprojlim}
\newcommand{\colim}{\varinjlim}
\newcommand{\sA}{\mathcal{A}}
\newcommand{\sC}{\mathcal{C}}
\newcommand{\A}{\mathbf{A}}
\newcommand{\D}{\mathbf{D}}
\newcommand{\M}{\mathbf{M}}
\newcommand{\pre}{{\operatorname{pre}}}
\newcommand{\ass}{{\operatorname{ass}}}
\newcommand{\Ens}{\operatorname{\mathbf{Set}}}
\newcommand{\Rep}{\operatorname{\mathbf{Rep}}}
\newcommand{\Cat}{\operatorname{\mathbb{CAT}}}
\newcommand{\IM}{\operatorname{Im}}
\renewcommand{\epsilon}{\varepsilon}
\renewcommand{\phi}{\varphi}
\newcommand{\iso}{\overset{\sim}{\longrightarrow}}
\newcommand{\by}{\xrightarrow}
\newtheorem{thm}{Theorem}[subsection]
\newtheorem{prop}[thm]{Proposition}
\newtheorem{cor}[thm]{Corollary}
\newtheorem{lemme}[thm]{Lemma}
\theoremstyle{definition}
\newtheorem{defn}[thm]{Definition}
\newtheorem{hyp}[thm]{Hypothesis}
\theoremstyle{remark}
\newtheorem{rque}[thm]{Remark}
\newtheorem{rques}[thm]{Remarks}
\newtheorem{ex}[thm]{Example}
\newtheorem{exs}[thm]{Examples}
\newtheorem{excan}[thm]{Canonical example}
\newcounter{spec}
\newenvironment{thlist}{\begin{list}{\rm{(\roman{spec})}}%
{\usecounter{spec}\labelwidth=20pt\itemindent=0pt\labelsep=10pt}}%
{\end{list}}%
\numberwithin{equation}{subsection}
\begin{document}
\title{On the Bénabou-Roubaud theorem }
\author{Bruno Kahn}
\address{CNRS, Sorbonne Université and Université Paris Cité, IMJ-PRG\\ Case 247\\4 place
Jussieu\\75252 Paris Cedex 05\\France}
\email{bruno.kahn@imj-prg.fr}
\date{January 17, 2025}
\begin{abstract}
We give a detailed proof of the Bénabou-Roubaud theorem. As a byproduct, it yields a weakening of its hypotheses: the base category does not need fibre products and the Beck-Chevalley condition, in the form of a natural transformation, can be weakened by only requiring the latter to be epi.
\end{abstract}
\keywords{Descent, monad, Beck-Chevalley condition}
\subjclass[(2020]{18D30, 18C15, 18F20}
\maketitle

\hfill To the memory of Jacques Roubaud.

\enlargethispage*{20pt}

\subsection*{Introduction} The Bénabou-Roubaud theorem \cite{br} establishes, under certain conditions, an equivalence of categories between a category of descent data and a category of algebras over a monad. This result is widely cited, but \cite{br} is a note ``without proofs'' and the ones I know in the literature are a bit terse (\cite[pp. 50/51]{guo}, \cite[proof of Lemma 4.1]{herminda}, \cite[Th. 8.5]{ln}), \cite[3.7]{facets}; moreover,  \cite{herminda} and \cite{ln} are formulated in more general contexts.

The aim of this note is to provide a detailed proof of this theorem in its original context. This exegesis has the advantage of showing that the original hypotheses can be weakened: it is not necessary to suppose that the base category admits fibred products\footnote{As was pointed out by the referee, the corresponding arguments are related to Street's notion of descent object relative to a truncated (co)simplicial category as in the beginning of \cite{street}; but a ``truncated cyclic category'' à la Connes is also lurking in Proposition \ref{p4.1} b).}, and the Chevalley property of \cite{br}, formulated as an exchange condition, can also be weakened by requiring that the base change morphisms be only epi.  I hope this will be useful to some readers. I also provided a proof of the equivalence between Chevalley's property and the exchange condition (attributed to Beck, but see remark \ref{r1}): this result is part of the folklore but, here again, I had difficulty finding a published proof. In Corollary \ref{c2}, I give a condition (probably too strong) for the Eilenberg-Moore comparison functor to be essentially surjective. Finally, I give cases in Proposition \ref{p3} where the exchange isomorphism holds; this is certainly classical, but it recovers conceptually Mackey's formula for the induced representations of a group (Example \ref{ex3}).

\subsubsection*{Notation and conventions}
I keep that of \cite{br}: thus $P:\M\to \A$ is a bifibrant functor in the sense of \cite[\S 10]{sga1}. If $A\in \A$, we denote by $\M(A)$ the fibre of $P$ above $A$. For an arrow $a:A_1\to A_0$ of $\A$, we write $a^*:\M(A_0)\to \M(A_1)$ and $a_*:\M(A_1)\to \M(A_0)$ for the associated inverse and direct image functors ($a_*$ is \emph{left} adjoint to $a^*$) and $\eta^a$, $\epsilon^a$ for the associated unit and counit. We also write $T^a=a^*a_*$ for the associated monad, equipped with its unit $\eta^a$ and its multiplication $\mu^a=a^*\epsilon^aa_*$. \emph{We do not assume the existence of fibre products in $\A$}.

In order to simplify calculations, we shall assume that the pseudofunctor $a\mapsto a^*$ is a functor. This can be justified by the fact that it can be rectified; more precisely, the morphism of pseudofunctors $i\mapsto F_i$ of \cite[\S 3, p. 141]{jardine} is clearly faithful, hence any parallel arrows in its source which become equal in its target are already equal. (One could also use \cite[I, Th. 2.4.2 or 2.4.4]{giraud}.) Then one can also choose the left adjoints $a\mapsto a_*$ to form a functor \cite[IV.8, Th. 1]{mcl}, which we do.

\subsection{Adjoint chases}\label{s1}
To elucidate certain statements and proofs, I start by doing two things: 1) ``deploy'' the single object $M_1$ of \cite{br} into several, which will allow us to remove the quotation marks from  ``natural'' at the bottom of \cite[ p. 96]{br}, 2) not assume the Beck-Chevalley condition to begin with, which will allow us to clarify the functoriality in the first lemma of the note and to weaken hypotheses.

\subsubsection{}\label{s1.1} Let $a$ be as above; still following the notation of \cite{br}, we give ourselves a commutative square
\begin{equation}\label{eq6}
\begin{CD}
A_2@>a_2>>A_1\\
@V a_1 VV @V aVV\\
A_1@> a>> A_0.
\end{CD}
\end{equation}
except that we don't require it to be Cartesian. The equality $a_1^* a^*=a_2^*a^*$ 
yields a base change morphism
\begin{equation}\label{eq0}
\chi: (a_2)_*a_1^*\Rightarrow T^a 
\end{equation}
equal to the composition $\epsilon^{a_2}T^a \circ (a_2)_* a_1^*\eta^{a}$.
Hence a map
\begin{multline}\label{eq4}
\xi_{M,N}=\xi:\M(A_1)(T^a M,N)\by{\chi_{M}^*} \M(A_1)((a_2)_*a_1^*M,N)\\
\overset{\adj}{\iso} \M(A_2)(a_1^*M,a_2^*N) 
\end{multline}
for $M,N\in \M(A_1)$. It goes in the \emph{opposite} direction to the map $K^a$ of \cite{br}, which we will find back in \eqref{eq1.1}. (See also Remark \ref{eq4.3} in that section.)

\begin{rque}\label{r1} The morphism \eqref{eq0} is sometimes called ``Beck transformation''. However, it already appears in SGA4 (1963/64) to formulate the proper base change and smooth base change theorems \cite[§4]{sga4}. I have adopted the terminology ``base change morphism'' in reference to this seminar.
\end{rque}

\begin{lemme}[key lemma]\label{l6} For any $\phi\in \M(A_1)(T^a M,N)$, one has
\[\xi(\phi) =
a_2^*\phi \circ a_1^*\eta^{a}_{M}.\]
\end{lemme}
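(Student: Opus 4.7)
The plan is to unfold the definition of $\xi(\phi)$ directly and recognize the right-hand side after applying one triangle identity. Starting from \eqref{eq4}, $\xi(\phi)$ is obtained by precomposing $\phi$ with $\chi_M$ and then transposing across the adjunction $(a_2)_*\dashv a_2^*$; using the explicit formula for this adjunction isomorphism, this gives
\[\xi(\phi) = a_2^*(\phi\circ \chi_M)\circ \eta^{a_2}_{a_1^*M} = a_2^*\phi \circ a_2^*\chi_M\circ \eta^{a_2}_{a_1^*M}.\]
So the identity to prove reduces to showing
\[a_2^*\chi_M\circ \eta^{a_2}_{a_1^*M} = a_1^*\eta^a_M.\]

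Next I would substitute the definition of $\chi$ from \eqref{eq0}, namely $\chi_M = \epsilon^{a_2}_{T^aM}\circ (a_2)_*a_1^*\eta^a_M$, to rewrite the left-hand side as
\[a_2^*\epsilon^{a_2}_{T^aM}\circ a_2^*(a_2)_*a_1^*\eta^a_M\circ \eta^{a_2}_{a_1^*M}.\]
By naturality of $\eta^{a_2}$ applied to the arrow $a_1^*\eta^a_M\colon a_1^*M\to a_1^*a^*a_*M$, the last two factors commute past each other:
\[a_2^*(a_2)_*a_1^*\eta^a_M\circ \eta^{a_2}_{a_1^*M} = \eta^{a_2}_{a_1^*a^*a_*M}\circ a_1^*\eta^a_M.\]
Using the (strict) equality $a_1^*a^* = a_2^*a^*$ from the commutative square \eqref{eq6}, the object $a_1^*a^*a_*M$ is literally $a_2^*T^aM$, so this rewrites as $\eta^{a_2}_{a_2^*T^aM}\circ a_1^*\eta^a_M$.

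The remaining composition $a_2^*\epsilon^{a_2}_{T^aM}\circ \eta^{a_2}_{a_2^*T^aM}$ is exactly one of the triangle identities of the adjunction $(a_2)_*\dashv a_2^*$, evaluated at $T^aM$, hence equals the identity on $a_2^*T^aM$. Substituting back gives $a_2^*\chi_M\circ \eta^{a_2}_{a_1^*M} = a_1^*\eta^a_M$, and therefore $\xi(\phi) = a_2^*\phi\circ a_1^*\eta^a_M$ as claimed. There is no real obstacle here: the argument is a string diagram chase whose only subtlety is keeping track of the identification $a_1^*a^* = a_2^*a^*$ (legitimate because we assumed the pseudofunctor $a\mapsto a^*$ is strict) so that naturality of $\eta^{a_2}$ can be applied at the correct object.
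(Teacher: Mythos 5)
Your proposal is correct and follows essentially the same route as the paper's proof: unfold the adjunction transpose as $a_2^*(-)\circ\eta^{a_2}_{a_1^*M}$, substitute the definition of $\chi_M$, apply naturality of $\eta^{a_2}$ to $a_1^*\eta^a_M$, and conclude with the triangle identity, using the strict equality $a_1^*a^*=a_2^*a^*$ to place the unit at the right object. The only difference is cosmetic: you isolate the sublemma $a_2^*\chi_M\circ\eta^{a_2}_{a_1^*M}=a_1^*\eta^a_M$ first, whereas the paper runs the whole computation as a single chain of equalities.
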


\begin{proof} For $\psi\in \M(A_1)(a_2)_*a_1^*M,N)$ one has $\adj(\psi)=a_2^*\psi \circ \eta^{a_2}_{a_1^*M}$, hence 
\begin{align*}
\xi(\phi)= \adj(\phi \circ\chi_{M}) &=a_2^*(\phi \circ\chi_{M}) \circ \eta^{a_2}_{a_1^*M}\\
&=a_2^*(\phi \circ(\epsilon^{a_2}T^a \circ (a_2)_*a_1^*\eta^{a})_{M}) \circ \eta^{a_2}_{a_1^*M}\\
&=a_2^*\phi \circ a_2^*\epsilon^{a_2}_{T^a M}\circ a_2^*(a_2)_* a_1^*\eta^{a}_{M} \circ \eta^{a_2}_{a_1^*M}\\
&=a_2^*\phi \circ a_2^*\epsilon^{a_2}_{T^a M}\circ  \eta^{a_2}_{a_1^*T^a M}\circ a_1^*\eta^{a}_{M}\\
&=a_2^*\phi\circ a_1^*\eta^{a}_{M}
\end{align*}
where we successively used the naturality of $\eta^{a_2}$ and an adjunction identity.\end{proof}

\subsubsection{}\label{s1.2} Let $A_3\in \A$ be equipped with ``projections'' $p_1,p_2,p_3:A_3\to A_2$. We assume that the ``face identities'' $a_1 p_2=a_1 p_3$, $a_1p_1=a_2p_3$, $a_2p_1=a_2p_2$ are satisfied; we call these morphisms respectively $b_1,b_2,b_3$.

\begin{excan}\label{ex5} $A_2 = A_1\times_{A_0} A_1$, $A_3=A_1\times_{A_0} A_1\times_{A_0} A_1$, all morphisms given by the natural projections.
\end{excan}

We then have maps, for  $i<j$
\begin{equation}\label{eq1.3}
\alpha_{ij}(M,N)=\alpha_{ij}:\M(A_2)(a_1^*M,a_2^*N)\to \M(A_3)(b_i^*M,b_j^*N)
\end{equation}
given by
\[\alpha_{12}=p_3^*,\quad \alpha_{13}=p_2^*, \quad \alpha_{23}=p_1^*\]
hence composite maps
\begin{equation}\label{eq1}
\theta_{ij}=\alpha_{ij}\circ \xi:\M(A_1)(T^a M,N)\to \M(A)(b_i^*M,b_j^*N).
\end{equation}
 
In addition, we have the multiplication of $T^a$ mentioned in the notations:
\begin{equation}\label{eq3}
\mu^a=a^*\epsilon^{a} a_*: T^a T^a  \Rightarrow T^a .
\end{equation}

The commutative square\footnote{Note that it is Cartesian in the canonical example.}
\begin{equation}\label{eq1.2}
\begin{CD}
A_3@>p_3>> A_2\\
@Vp_1 VV @Va_2 VV\\
A_2@>a_1>> A_1
\end{CD}
\end{equation}
yields another base change morphism $\lambda:(p_1 )_*p_3^*\Rightarrow a_1^*(a_2)_*$, hence a composition
\begin{equation}\label{eq8}
(b_3)_*b_1^*= (a_2)_*(p_1)_*p_3^*a_1^*\overset{(a_2)_*\lambda a_1^*}{\Longrightarrow} (a_2)_*a_1^*(a_2)_*a_1^* \overset{\chi*\chi}{\Longrightarrow} T^a T^a 
\end{equation}
which, together with adjunction, induces a map
\begin{equation}\label{eq1a}
\rho:\M(A_1)(T^a T^a M,N)\to \M(A_2)(b_1^*M,b_3^*N).
\end{equation}

\begin{lemme}\label{l1} a) The diagram of natural transformations
\[\xymatrix{
(a_2)_*(p_1)_*p_3^*a_1^*\ar@{=>}[d]_{(a_2)_*\lambda a_1^*}&(b_3)_*b_1^*\ar@{=}[l]\ar@{=}[r]& (a_2)_*(p_2)_*p_2^*a_1^*\ar@{=>}[d]^{(a_2)_*\epsilon^{p_2}a_1^*}\\
(a_2)_*a_1^*(a_2)_*a_1^*\ar@{=>}[d]_{\chi*\chi}&& (a_2)_*a_1^*\ar@{=>}[d]^\chi\\
T^aT^a\ar@{=>}[rr]^{\mu^a}&& T^a
}\]
is commutative.\\
b) One has $\theta_{13}=\rho \circ \mu_a^*$ (see \eqref{eq1}, \eqref{eq3} and \eqref{eq1a}).
\end{lemme}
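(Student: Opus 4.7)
The plan is to reduce part a) to a comparison in $\M(A_3)$: applying the $(b_{3*},b_3^*)$ adjunction, the two composites $(b_3)_*b_1^*\Rightarrow T^a$ coincide iff their adjoint natural transformations $b_1^*\Rightarrow b_3^*T^a$ agree.

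First I would compute the $b_3$-adjoint of the right-hand path $\chi\circ (a_2)_*\epsilon^{p_2}a_1^*$ by factoring $b_{3*}=a_{2*}p_{2*}$: expanding $\chi=\epsilon^{a_2}T^a\circ (a_2)_*a_1^*\eta^a$ and using the identity $\adj^{a_2}(\epsilon^{a_2}_Y\circ a_{2*}(h))=h$, the $a_2$-adjoint is $a_1^*\eta^a_M\circ\epsilon^{p_2}_{a_1^*M}$, and the $(p_{2*},p_2^*)$ triangle identity then collapses the $p_2$-adjoint to $b_1^*\eta^a_M$. For the left-hand path $\mu^a\circ(\chi*\chi)\circ (a_2)_*\lambda a_1^*$, I would instead decompose $b_{3*}=a_{2*}p_{1*}$. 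Expanding $(\chi*\chi)_M=T^a\chi_M\circ\chi_{(a_2)_*a_1^*M}$ and the inner $\chi$, naturality of $\epsilon^{a_2}$ lets me pull $\epsilon^{a_2}_{T^aM}$ to the outside; the monad unit identity $\mu^a_M\circ T^a\chi_M\circ\eta^a_{(a_2)_*a_1^*M}=\chi_M$ then collapses the middle, yielding $a_1^*\chi_M\circ\lambda_{a_1^*M}$ as the $a_2$-adjoint. A short direct calculation from the definition of $\lambda$ and the $(p_{1*},p_1^*)$ triangle identity gives $\adj^{p_1}(\lambda_{a_1^*M})=p_3^*\eta^{a_2}_{a_1^*M}$, and the key lemma \ref{l6} applied to $\phi=\Id_{T^aM}$ gives $\adj^{a_2}(\chi_M)=a_1^*\eta^a_M$. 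Combining these, the $b_3$-adjoint of the left path is $p_3^*(a_1^*\eta^a_M)=b_1^*\eta^a_M$ as well.

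Part b) is then a short deduction: Lemma \ref{l6} gives $\theta_{13}(\phi)=p_2^*\xi(\phi)=b_3^*\phi\circ b_1^*\eta^a_M$. Unfolding $\rho$ via the $(b_{3*},b_3^*)$ adjunction and using naturality of the adjunction isomorphism in the target yields $\rho(\phi\circ\mu^a_M)=b_3^*\phi\circ\adj^{b_3}(\mu^a_M\circ\sigma_M)$, where $\sigma$ denotes the natural transformation of \eqref{eq8}. By part a) and the right-path computation above, $\adj^{b_3}(\mu^a_M\circ\sigma_M)=\adj^{b_3}(\chi_M\circ (a_2)_*(\epsilon^{p_2}_{a_1^*M}))=b_1^*\eta^a_M$, and the two expressions agree.

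The hard part will be the bookkeeping in part a): several adjunction manipulations must be chained (a monad unit identity, two triangle identities, naturality of $\epsilon^{a_2}$, and the key lemma) while carefully tracking the base-change morphisms and the identifications $p_1^*a_1^*=p_3^*a_2^*$, $p_2^*a_1^*=p_3^*a_1^*=b_1^*$, and $p_1^*a_2^*=p_2^*a_2^*=b_3^*$ coming from the face identities, and trusting that the two different internal decompositions of $(b_{3*},b_3^*)$ used on the two paths produce the same canonical $b_3$-adjoint.
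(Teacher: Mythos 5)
Your argument is correct and is essentially the paper's: the paper's (very terse) proof likewise ``develops the base change morphisms'' into units and counits as in Lemma~\ref{l6} and obtains b) by transposing along the $((b_3)_*,b_3^*)$ adjunction, which is exactly what you carry out in more detail. The coherence point you flag --- that the two factorizations $(a_2)_*(p_1)_*$ and $(a_2)_*(p_2)_*$ of $(b_3)_*$ induce the same adjunction isomorphism --- is covered by the paper's standing convention that $a\mapsto a_*$ has been rectified to a functor with compatible units and counits, so both composite adjunctions coincide with the one for $b_3$.
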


\begin{proof} 
a) is a matter of developing the base change morphisms as done for $\chi$ just below \eqref{eq0} (see proof of Lemma \ref{l6}). This yields a commutative diagram
\[\begin{CD}
\M(A_1)((b_3)_*b_1^* M,N)@<((a_2)_*\epsilon^{p_2}a_1^*)^*<< \M(A_1)((a_2)_*a_1^* M,N)\\
@A\eqref{eq8}^*AA @A\chi^* AA\\
\M(A_1)(T^a T^a M,N)@<(\mu^a)^*<< \M(A_1)(T^a M,N)
\end{CD}\]
from which we get b) by developing the adjunction isomorphism for $((b_3)_*,b_3^*)$.
\end{proof}

Let now $M_1,M_2,M_3\in \M(A_1)$ and $\phi_{ij}\in \M(A_1)(T^a M_i,M_j)$ be three morphisms. We have a not necessarily commutative square:
\begin{equation}\label{eq2}
\begin{CD}
T^a T^a M_1@>T^a \phi_{12}>> T^a M_2\\
@V(\mu_a)_{M_1} VV @V\phi_{23} VV\\
T^a M_1@>\phi_{13}>> M_3.
\end{CD}
\end{equation}

Write $\hat{\phi}_{ij}=\theta_{ij}(\phi_{ij}):b_i^*M_i\to b_j^*M_j$. 

\begin{lemme}\label{l2} Let $\psi$ (resp. $\psi'$) be the composition of \eqref{eq2} passing through $T^a M_2$ (resp. through $T^a M_1$ ). Then $\rho(\psi)= \hat{\phi}_{23} \circ \hat{\phi}_{12}$ and  $\rho(\psi')=\hat{\phi}_{13}$.
\end{lemme}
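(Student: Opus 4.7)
My plan is to handle the two identities separately. The second is immediate from Lemma \ref{l1} b): since $\psi' = \phi_{13} \circ (\mu^a)_{M_1} = (\mu^a)^*(\phi_{13})$, applying $\rho$ and invoking $\theta_{13} = \rho \circ (\mu^a)^*$ gives $\rho(\psi') = \theta_{13}(\phi_{13}) = \hat{\phi}_{13}$.

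For the first identity I would compute each side as an explicit morphism $b_1^*M_1 \to b_3^*M_3$. Applying Lemma \ref{l6} to $\phi_{12}$ and $\phi_{23}$ and using the face identities $a_1p_3=b_1$, $a_1p_1=a_2p_3=b_2$, $a_2p_1=b_3$, one finds
\[\hat{\phi}_{12} = b_2^*\phi_{12} \circ b_1^*\eta^a_{M_1}, \qquad \hat{\phi}_{23} = b_3^*\phi_{23} \circ b_2^*\eta^a_{M_2},\]
the compositions being legitimate thanks to the equalities $ab_1=ab_2=ab_3$ (consequences of $aa_1=aa_2$ together with the face identities). Naturality of $\eta^a$ along $\phi_{12}\colon T^aM_1 \to M_2$, combined with the identification $b_2^*T^a = b_3^*T^a$, collapses the product to
\[\hat{\phi}_{23}\circ\hat{\phi}_{12} = b_3^*\psi \circ b_2^*\eta^a_{T^aM_1} \circ b_1^*\eta^a_{M_1},\]
so it will suffice to prove the ``iterated key lemma'' $\rho(\sigma) = b_3^*\sigma \circ b_2^*\eta^a_{T^aM_1} \circ b_1^*\eta^a_{M_1}$ for arbitrary $\sigma\colon T^aT^aM_1 \to M_3$.

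For this I would expand $\rho(\sigma)$ using its definition (precompose with \eqref{eq8} at $M_1$, then take the adjoint), use the factorisation $\eta^{b_3} = p_1^*\eta^{a_2}(p_1)_* \circ \eta^{p_1}$ of the unit of the composed adjunction $(b_3)_* \dashv b_3^*$, and unfold \eqref{eq8} as the composite of $(a_2)_*\lambda a_1^*$ and $\chi*\chi$. The argument of Lemma \ref{l6} would then be applied in two stages: first to $\lambda$, whose adjunct under $((p_1)_*, p_1^*)$ is $p_3^*\eta^{a_2}$ by the identical calculation (now using $p_1^*a_1^* = p_3^*a_2^*$ from \eqref{eq1.2}), and then to $\chi$ at $(a_2)_*a_1^*M_1$ and again at $M_1$, interleaved with naturality of $\eta^{a_2}$ and of $\eta^a$ along $\chi_{M_1}$. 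The main obstacle is the bookkeeping in this last stage: the two successive invocations of the key lemma for $\chi$, separated by one use of naturality of $\eta^a$, must be organised carefully so that the horizontal identifications $b_i^*a^* = b_j^*a^*$ line up and the doubled unit $b_2^*\eta^a_{T^aM_1} \circ b_1^*\eta^a_{M_1}$ emerges; the structure is nevertheless entirely parallel to the single-step calculation in the proof of Lemma \ref{l6}.
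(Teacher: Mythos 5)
Your proposal is correct and takes essentially the same route as the paper, which deduces $\rho(\psi')=\hat{\phi}_{13}$ from Lemma \ref{l1}~b) exactly as you do and dismisses $\rho(\psi)=\hat{\phi}_{23}\circ\hat{\phi}_{12}$ as ``a standard adjunction calculation similar to the previous ones'' --- namely the two-stage unit/naturality chase you outline. Your intermediate formulas $\hat{\phi}_{12}=b_2^*\phi_{12}\circ b_1^*\eta^a_{M_1}$, $\hat{\phi}_{23}=b_3^*\phi_{23}\circ b_2^*\eta^a_{M_2}$ and the ``iterated key lemma'' $\rho(\sigma)=b_3^*\sigma\circ b_2^*\eta^a_{T^aM_1}\circ b_1^*\eta^a_{M_1}$ are all correct, and the bookkeeping you flag does go through as described.
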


\begin{proof} The first point follows from a standard adjunction calculation similar to the previous ones, and the second follows from lemma \ref{l1}.
\end{proof}

\begin{prop}\label{p1} If \eqref{eq2} commutes, we have $\hat{\phi}_{13}=\hat{\phi}_{23} \circ \hat{\phi}_{12}$; the converse is true if $\rho$ is injective in \eqref{eq1a}.
\end{prop}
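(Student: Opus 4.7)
The proposition is essentially a direct corollary of Lemma \ref{l2}, so the plan is short.

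The idea is to translate the commutativity (or not) of the square \eqref{eq2} into the equality (or not) of two elements in $\M(A_1)(T^aT^aM_1,M_3)$, and then apply $\rho$ to transport the question into $\M(A_2)(b_1^*M_1,b_3^*M_3)$. Concretely, let $\psi=\phi_{23}\circ T^a\phi_{12}$ and $\psi'=\phi_{13}\circ(\mu^a)_{M_1}$ be the two legs of \eqref{eq2}; then \eqref{eq2} commutes if and only if $\psi=\psi'$.

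By Lemma \ref{l2} we have $\rho(\psi)=\hat\phi_{23}\circ\hat\phi_{12}$ and $\rho(\psi')=\hat\phi_{13}$ (the latter using also Lemma \ref{l1} b) to rewrite $\hat\phi_{13}=\theta_{13}(\phi_{13})=\rho(\mu_a^*\phi_{13})=\rho(\psi')$). Thus, if \eqref{eq2} commutes, applying $\rho$ to $\psi=\psi'$ immediately gives $\hat\phi_{13}=\hat\phi_{23}\circ\hat\phi_{12}$. Conversely, if $\hat\phi_{13}=\hat\phi_{23}\circ\hat\phi_{12}$, we obtain $\rho(\psi)=\rho(\psi')$, and the injectivity hypothesis on $\rho$ yields $\psi=\psi'$, i.e.\ the commutativity of \eqref{eq2}.

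There is really no obstacle here; the whole content has been packaged into Lemmas \ref{l1} and \ref{l2}. The only thing to double-check while writing is that the identification $\hat\phi_{13}=\rho(\psi')$ uses Lemma \ref{l1} b) correctly, i.e.\ that $\mu_a^*\phi_{13}$ in the statement of Lemma \ref{l1} b) agrees with the leg $\psi'=\phi_{13}\circ(\mu^a)_{M_1}$ of the square \eqref{eq2}, which is immediate by definition of $\mu_a^*$ as precomposition with $\mu^a$.
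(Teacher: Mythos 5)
Your proof is correct and is exactly the paper's argument: the paper's own proof reads ``This is obvious in view of Lemma \ref{l2}'', and your write-up simply fills in the routine details (apply $\rho$ to $\psi=\psi'$ for one direction, use injectivity of $\rho$ for the converse). No discrepancy to report.
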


\begin{proof} This is obvious in view of Lemma \ref{l2}.
\end{proof}

In \eqref{eq4}, assume that $M=N$ is of the form $a^*M_0$ and write $p=aa_1=aa_2:A_2\to A_0$. We have a composition
\begin{multline}\label{eq4a}
\M(A_1)(M,a^*M_0)\iso \M(A_0)(a_*M,M_0)\\
\by{a^*}\M(A_1)(T^a M,a^*M_0)\by{\xi} \M(A_2)(a_1^*M,p^*M_0) 
\end{multline}
where the first arrow is the adjunction isomorphism. A new adjoint chase gives:

\begin{lemme}\label{l3} The composition \eqref{eq4a} is induced by $a_1^*$.\qed
\end{lemme}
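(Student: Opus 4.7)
The plan is to unwind the three arrows of \eqref{eq4a} one at a time, applied to a general $\phi\in \M(A_1)(a^*M_0,a^*M_0)$, and recognize at the end that the result is $a_1^*\phi$. The only nontrivial ingredient is Lemma \ref{l6}; everything else is formal, using the functoriality of $a\mapsto a^*$ together with the commutativity $aa_1=aa_2=p$ of \eqref{eq6}.

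First I would name $\tilde\phi:a_*a^*M_0\to M_0$ the element of $\M(A_0)(a_*a^*M_0,M_0)$ corresponding to $\phi$ under the $(a_*,a^*)$-adjunction, so that by the standard formula
\[\phi=a^*\tilde\phi\circ \eta^a_{a^*M_0}.\]
The second arrow of \eqref{eq4a} sends $\tilde\phi$ to $a^*\tilde\phi:T^a a^*M_0\to a^*M_0$. Then Lemma \ref{l6}, applied with $M=N=a^*M_0$ and with the morphism $a^*\tilde\phi$ in place of the generic $\phi$ of that statement, gives
\[\xi(a^*\tilde\phi)=a_2^*a^*\tilde\phi\circ a_1^*\eta^a_{a^*M_0}.\]

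Next I would use the functoriality assumption on $a\mapsto a^*$ together with $aa_1=aa_2=p$ to identify $a_2^*a^*=p^*=a_1^*a^*$, so that
\[a_2^*a^*\tilde\phi\circ a_1^*\eta^a_{a^*M_0}
=a_1^*a^*\tilde\phi\circ a_1^*\eta^a_{a^*M_0}
=a_1^*\bigl(a^*\tilde\phi\circ \eta^a_{a^*M_0}\bigr)=a_1^*\phi,\]
which is the desired formula.

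The main (and essentially only) obstacle is notational: one must be careful that the two ``$p^*$'' obtained from $a_1^*a^*$ and $a_2^*a^*$ agree on the nose, which is exactly what the strict functoriality convention made in the ``Notation and conventions'' paragraph is designed to ensure. Once that identification is granted, the calculation is a one-line consequence of Lemma \ref{l6} and the defining formula for the adjunction isomorphism $(a_*,a^*)$.
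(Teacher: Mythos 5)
Your proof is correct and is exactly the ``adjoint chase'' the paper leaves to the reader (the lemma is stated with a \qed and the remark that ``a new adjoint chase gives'' it): you invoke Lemma \ref{l6} on $a^*\tilde\phi$, then use the strict identification $a_2^*a^*=p^*=a_1^*a^*$ to factor out $a_1^*$. No gaps.
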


\subsection{Exchange condition and weak exchange condition} Now we introduce the

\begin{defn}\label{d1} A commutative square \eqref{eq6} is said to satisfy the \emph{exchange condition} if the base change morphism \eqref{eq0} is an isomorphism; we say that \eqref{eq6} satisfies the \emph{weak exchange condition} if \eqref{eq0} is epi.
\end{defn}

\begin{lemme}[cf. {\cite[Prop. 11]{pav}} and {\cite[II.3]{pav2}}]\label{l5}   The exchange condition of Definition \ref{d1} is equivalent to the Chevalley condition (C) of \cite{br}.
\end{lemme}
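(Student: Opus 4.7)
The plan is to reformulate the exchange condition via Yoneda into a bijectivity statement about hom-sets, then use the explicit description of $\xi$ from Lemma \ref{l6} to recognize Chevalley's condition (C) of \cite{br}.

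First I would argue that $\chi: (a_2)_*a_1^*\Rightarrow T^a$ is a natural isomorphism iff, for all $M,N\in \M(A_1)$, the map $\chi_M^*:\M(A_1)(T^a M,N)\to \M(A_1)((a_2)_*a_1^*M,N)$ is a bijection (this is just Yoneda applied at each $M$, with naturality in $N$ built in). Composing with the adjunction isomorphism $\M(A_1)((a_2)_*a_1^*M,N)\iso \M(A_2)(a_1^*M,a_2^*N)$, which is already a bijection without any hypothesis, this is in turn equivalent to the map $\xi=\xi_{M,N}$ of \eqref{eq4} being a bijection for all $M,N$.

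Next, by the key Lemma \ref{l6}, $\xi(\phi)=a_2^*\phi\circ a_1^*\eta^a_M$. Precomposing $\xi$ with the adjunction isomorphism $\M(A_0)(a_*M,a_*N)\iso \M(A_1)(a^*a_*M,N)=\M(A_1)(T^aM,N)$, one obtains a natural map $\M(A_0)(a_*M,a_*N)\to\M(A_2)(a_1^*M,a_2^*N)$ whose explicit form is fully determined by the unit $\eta^a$ and the pullback functors. Chevalley's condition (C) of \cite{br}, when unwound, asserts that this very natural map is a bijection: it characterizes morphisms $a_1^*M\to a_2^*N$ in the fibre over $A_2$ as coming uniquely from morphisms $a_*M\to a_*N$ in the fibre over $A_0$, via the canonical ``cartesian lift'' construction. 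So $\xi$ bijective is Chevalley's (C), and conversely. The case $M=a^*M_0$ of Lemma \ref{l3} provides the compatibility check that identifies the map built out of $\eta^a$ in our formula with the one built out of cartesian liftings in \cite{br}.

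The main obstacle is purely bookkeeping: translating the formulation of (C) as given in \cite{br} (phrased in terms of cartesian arrows in $\M$ above the commutative square \eqref{eq6}) into the fibre-categorical formulation involving $\eta^a$, $a_1^*$, and $a_2^*$ above, and verifying that the canonical map produced by Chevalley agrees with the composite $\adj\circ\chi_M^*$ defining $\xi$. Once this identification is made, each implication is immediate from the key lemma.
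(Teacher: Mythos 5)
Your opening reduction is fine: by Yoneda, $\chi_M$ is invertible iff $\chi_M^*$ is bijective for every $N$, i.e.\ iff $\xi_{M,N}$ is bijective for all $N$. But the concrete bridge you then propose to condition (C) rests on the wrong adjunction. In this paper $a_*$ is \emph{left} adjoint to $a^*$ (this is what makes $T^a=a^*a_*$ a monad on $\M(A_1)$), so the adjunction isomorphism reads $\M(A_0)(a_*M,a_*N)\cong\M(A_1)(M,a^*a_*N)=\M(A_1)(M,T^aN)$, not $\M(A_1)(T^aM,N)$. There is no ``adjunction isomorphism'' $\M(A_0)(a_*M,a_*N)\iso\M(A_1)(T^aM,N)$ with which to precompose $\xi$; the natural map $\M(A_0)(a_*M,a_*N)\to\M(A_1)(T^aM,T^aN)$ is $a^*$, which is not bijective in general (cf.\ Lemma \ref{l4}). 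So bijectivity of your composite $\M(A_0)(a_*M,a_*N)\to\M(A_2)(a_1^*M,a_2^*N)$ is not equivalent to bijectivity of $\xi$, and the reduction collapses. Lemma \ref{l3} also does not supply the ``compatibility check'' you invoke: it computes \eqref{eq4a} only for $M=N=a^*M_0$ and serves Proposition \ref{p2}.

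More importantly, the step you dismiss as ``purely bookkeeping'' is the entire content of the lemma. Condition (C) is not stated as a bijectivity of hom-sets: it says that in a commutative square \eqref{eq7} of $\M$ over \eqref{eq6}, if $\chi,\chi'$ are Cartesian and $k_0$ is co-Cartesian then $k_1$ is co-Cartesian. Relating this to \eqref{eq0} requires (i) straightening the four arrows into fibre morphisms $\tilde k_0,\tilde k_1,\tilde\chi,\tilde\chi'$ and noting that (co-)Cartesianity means these are isomorphisms; (ii) checking that commutativity of \eqref{eq7} identifies $\tilde k_1$ with the composite of $\tilde\chi^{-1}$, $a^*\tilde k_0$, the base change morphism at $M'_0$, and $(a_2)_*\tilde\chi'$, so that $\tilde k_1$ is invertible iff the base change morphism is; and (iii) for (C) $\Rightarrow$ exchange, exhibiting for each object a square satisfying the hypotheses of (C) (take $\tilde k_0,\tilde\chi,\tilde\chi'$ to be identities). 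None of this is carried out; asserting that (C) ``characterizes morphisms $a_1^*M\to a_2^*N$ as coming uniquely from morphisms $a_*M\to a_*N$'' restates the conclusion rather than deriving it from the actual formulation of (C). This is the gap, and it is exactly what the paper's proof fills.
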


\begin{proof} Recall this condition: given a commutative square 
\begin{equation}\label{eq7}
\begin{CD}
M'_1@>k_1>> M_1\\
@V\chi' VV @V\chi VV\\ 
M'_0@>k_0 >> M_0,
\end{CD}
\end{equation}
above \eqref{eq6} (where we take $(i,j)=(1,2)$ to fix ideas), if $\chi$ and $\chi'$ are Cartesian and $k_0 $ is co-Cartesian, then $k_1$ is co-Cartesian.

I will show that the exchange condition is equivalent to each of the following two conditions: (C) and
\begin{description}
 \item[(C')]  if $k_0$ and $k_1$ are co-Cartesian and $\chi'$ is Cartesian, then $\chi$ is Cartesian.
\end{description}

Let us translate the commutativity of \eqref{eq7} in terms of the square
\begin{equation}\label{eq5}
\begin{CD}
\M(A_2)@ >(a_2)_*>>\M(A_1)\\
@Aa_1^*AA @A a^* AA\\
\M(A_1)@>a_*>> \M(A_0).
\end{CD}
\end{equation}

The morphisms of \eqref{eq7} correspond to morphisms $\tilde k_0:a_*M'_0\allowbreak\to M_0$, $\tilde k_1:(a_1)_*M'_1\to M_1$, $\tilde \chi: M_1\to a^*M_0$ and $\tilde \chi': M'_1\to a_2^*M'_0$, which fit in a commutative diagram of $\M(A_1)$:
\[\xymatrix{
(a_2)_*a_1^* M'_0\ar[rr]^c&& T^a  M'_0\ar[d]^{a^*\tilde k_0}\\
(a_2)_* M'_1\ar[u]^{(a_2)_*\tilde \chi'}\ar[r]^{\tilde k_1} & M_1\ar[r]^{\tilde \chi}& a^* M_0
}\]
where $c$ is the base change morphism of \eqref{eq0}. The cartesianity conditions on $\chi$ and $\chi'$ (resp. co-cartesianity conditions on $k_0$ and $k_1$) amount to requesting the corresponding morphisms decorated with a $\tilde{}$ to be isomorphisms.

Suppose $c$ is an isomorphism. If $\tilde \chi'$ and $\tilde k_0$ are isomorphisms, $\tilde \chi$ is an isomorphism if and only if $\tilde k_1$ is. Thus, the exchange condition implies conditions (C) and (C'). Conversely, $M'_0$ being given, let $\tilde k_0$, $\tilde \chi$ and $\tilde \chi'$ be identities, which successively defines $M_0$, $M_1$ and $M' _1$. The arrow $c$ then defines an arrow $\tilde k_1$, which is an isomorphism if and only if so is $c$. This shows that the exchange condition is implied by (C), and we argue symmetrically for (C') by taking $\tilde \chi'$, $\tilde k_1$ and $\tilde k_0$ to be identities.
\end{proof}

\enlargethispage*{30pt}

\begin{rques} a) This proof did not use the hypothesis that \eqref{eq6} be Cartesian.\\
b) Under conservativity assumptions for $a_2^*$ or $a^*$, we obtain converses to (C) and (C').
\end{rques}

\subsection{Pre-descent data}\label{s3} Here we come back to the set-up of Section \ref{s1}: namely, we give ourselves a commutative diagram \eqref{eq6} as in \S\ref{s1.1} and a system $(A_3,p_1,p_2,p_3)$ as in the beginning of \S\ref{s1.2} satisfying the identities of \emph{loc. cit.} In other words, we have a set of objects and morphisms of $\A$ 
\[(A_0,A_1,A_2,A_3,a,a_1,a_2,p_1,p_2,p_3)\] 
subject to the  relations 
\[aa_1=aa_2, \quad a_1 p_2=a_1 p_3, \quad a_1p_1=a_2p_3, \quad a_2p_1=a_2p_2.\]

Let $M\in \M(A_1)$ and  $v\in\M(A_2)(a_1^*M,a_2^* M)$. We associate to $v$ three morphisms
\[\hat{\phi}_ {ij}=\alpha_{ij}(v):b_i^*M\to b_j^*M\quad (i<j) \]
where $\alpha_{ij}$ are the maps of \eqref{eq1.3}.

\begin{defn}\label{d2} We say that $v$ is a \emph{pre-descent datum} on $M$ if the $\hat{\phi}_ {ij}$ satisfy the condition $\hat{\phi}_{13}=\hat{\phi}_{23} \circ \hat{\phi}_{12}$ of Proposition \ref{p1}.
We write $\D^\pre$ for the category whose objects are pairs $(M,v)$, where $v$ is a pre-descent datum on $M$, and whose morphisms are those of $\M(A_1)$ which commute with pre-descent data.
\end{defn}

Let us introduce the 

\begin{hyp}\label{h1} The weak exchange condition is verified by the squares \eqref{eq6} and \eqref{eq1.2}.
\end{hyp}

\begin{prop}[cf. \protect{\cite[lemme]{br}}]\label{c1}
In \eqref{eq2}, assume $\phi_{12}=\phi_{23}=\phi_{13}=:\phi$. If $\phi$ satisfies the associativity condition of a $T^a$-algebra, then $\xi(\phi)$ in \eqref{eq4} is a pre-descent datum; the converse is true under Hypothesis \ref{h1}.\end{prop}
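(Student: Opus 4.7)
The plan is to apply Proposition~\ref{p1} with the specialisation $M_1=M_2=M_3=:M$ and $\phi_{12}=\phi_{23}=\phi_{13}=:\phi$. Under this identification, the commutativity of the square \eqref{eq2} is literally the $T^a$-algebra associativity axiom $\phi\circ T^a\phi=\phi\circ\mu^a_M$, while the three morphisms $\hat\phi_{ij}=\theta_{ij}(\phi)$ all coincide with $\alpha_{ij}(\xi(\phi))$. Hence the cocycle identity $\hat\phi_{13}=\hat\phi_{23}\circ\hat\phi_{12}$ produced by the direct implication of Proposition~\ref{p1} becomes, via Definition~\ref{d2}, exactly the statement that $\xi(\phi)$ is a pre-descent datum on $M$, which establishes the direct implication at once, unconditionally.

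For the converse, Proposition~\ref{p1} yields the associativity of $\phi$ from the pre-descent condition provided the map $\rho$ of \eqref{eq1a} is injective, so everything reduces to showing that Hypothesis~\ref{h1} forces injectivity of $\rho$. Unwinding the definition, $\rho$ is the composite of the adjunction bijection $\M(A_1)((b_3)_*b_1^*M,N)\iso\M(A_2)(b_1^*M,b_3^*N)$ with precomposition by the component at $M$ of the natural transformation \eqref{eq8}. Since the adjunction factor is a bijection, $\rho$ is injective if and only if this component $(b_3)_*b_1^*M\to T^aT^aM$ is an epimorphism in $\M(A_1)$.

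The $M$-component of \eqref{eq8} factors as $(\chi*\chi)_M\circ(a_2)_*(\lambda_{a_1^*M})$, and since the composition of two epis is again epi it suffices to verify that each factor is epi. The first factor is the image under the left adjoint $(a_2)_*$ of $\lambda_{a_1^*M}$, which is epi by weak exchange for \eqref{eq1.2} (Hypothesis~\ref{h1}); hence it is epi. For the second factor, one expands $(\chi*\chi)_M$ via its Godement decomposition, for instance as $\chi_{T^aM}\circ(a_2)_*a_1^*(\chi_M)$, and combines the pointwise epi-ness of $\chi$ granted by weak exchange for \eqref{eq6} with the epi-preservation property of the left adjoint $(a_2)_*$.

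The main obstacle I foresee is precisely this last step: the right adjoint $a^*$ (or $a_1^*$) occurs inside the Godement decomposition of $(\chi*\chi)_M$, and right adjoints do not preserve epis in general. Resolving this cleanly may require expanding \eqref{eq8}$_M$ as a single explicit adjoint chase in the spirit of Lemmas~\ref{l6} and~\ref{l1}, so that the desired epi-ness can be read off directly from Hypothesis~\ref{h1} without having to control each factor individually.
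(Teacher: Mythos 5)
Your strategy coincides with the paper's: reduce everything to Proposition \ref{p1} (the direct implication being unconditional, since with $M_1=M_2=M_3=M$ and $\phi_{ij}=\phi$ the commutativity of \eqref{eq2} is the associativity axiom and the cocycle identity for the $\hat\phi_{ij}=\alpha_{ij}(\xi(\phi))$ is the pre-descent condition), and then show that Hypothesis \ref{h1} makes $\rho$ injective by proving that each of the two natural transformations in \eqref{eq8} induces an injection on Hom's. Your treatment of the first factor is exactly the paper's ``by adjunction'' step: $\lambda_{a_1^*M}$ is epi by weak exchange for \eqref{eq1.2}, and one either applies the left adjoint $(a_2)_*$ or, equivalently, transposes across the $((a_2)_*,a_2^*)$ adjunction.

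The obstacle you flag at the end is precisely the point where the paper's proof consists of the single assertion ``the second [natural transformation, i.e.\ $\chi*\chi$] is epi'', with no further justification; so you have not missed an argument that the paper supplies. Your diagnosis of the difficulty is also accurate, and the escape route you suggest (one global adjoint chase) cannot work: since $\rho$ is precomposition with \eqref{eq8}$_M$ followed by a bijection, its injectivity for all $N$ is \emph{equivalent} to \eqref{eq8}$_M$ being epi, and since the first factor $(a_2)_*\lambda_{a_1^*M}$ is already epi, this forces $(\chi*\chi)_M$ to be epi --- there is no way to bypass that statement. Writing $(\chi*\chi)_M=T^a(\chi_M)\circ\chi_{(a_2)_*a_1^*M}$ and cancelling the epi $\chi_{(a_2)_*a_1^*M}$, one sees that $(\chi*\chi)_M$ is epi if and only if $T^a(\chi_M)=a^*(a_*\chi_M)$ is epi; here $a_*\chi_M$ is epi because $a_*$ is a left adjoint, but $a^*$ is a right adjoint, so this does not follow formally from Hypothesis \ref{h1} alone. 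To close the gap one needs an extra input --- e.g.\ that $a^*$ (or $a_1^*$) preserves epimorphisms, or that $\chi$ is \emph{split} epi, or simply a strengthening of Hypothesis \ref{h1} requiring $\chi*\chi$ itself to be epi; any of these completes your argument (and the paper's). Note also that only injectivity of $\rho$ on the two specific morphisms $\phi\circ T^a\phi$ and $\phi\circ\mu^a_M$ is needed, which neither you nor the paper exploits.
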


\begin{proof} In view of Proposition \ref{p1}, it suffices to show that Hypothesis  \ref{h1} implies the injectivity of $\rho$, which is induced by the composition of the two natural transformations of \eqref{eq8}. The second is epi, therefore induces an injection on  Hom's, and so does the first by adjunction.
\end{proof}

\begin{cor}\label{c1a} Let $\M^a_\ass$ denote the category of associative $T^a$-algebras which are not necessarily unital. Then Proposition \ref{c1} defines a faithful functor $\xi:\M^a_\ass\to \D^\pre$ commuting with the forgetful functors to $\M(A_1)$; under Hypothesis \ref{h1}, it is an isomorphism of categories. 
\end{cor}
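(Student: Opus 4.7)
The plan is to unpack the machinery of Section~\ref{s1}. On objects, I send $(M,\phi)\in\M^a_\ass$ to $(M,\xi(\phi))$, which lies in $\D^\pre$ by the forward direction of Proposition~\ref{c1}. For a morphism $f:(M,\phi)\to(N,\psi)$ of algebras, i.e.\ one satisfying $f\circ\phi=\psi\circ T^a f$, I would verify the commutation $a_2^* f\circ\xi(\phi)=\xi(\psi)\circ a_1^* f$ by expanding both sides with Lemma~\ref{l6}:
\[a_2^* f\circ\xi(\phi)=a_2^*(f\circ\phi)\circ a_1^*\eta^a_M,\qquad\xi(\psi)\circ a_1^* f=a_2^*\psi\circ a_1^* T^a f\circ a_1^*\eta^a_M,\]
where the second equality uses naturality of $\eta^a$ together with the identity $a_1^* a^*=a_2^* a^*$ coming from \eqref{eq6} to rewrite $a_1^* T^a f$ as $a_2^* T^a f$. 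The algebra-morphism relation then identifies the two sides. Compatibility with the forgetful functors is built into the construction, and faithfulness is immediate since $\xi$ acts as the identity on underlying morphisms in $\M(A_1)$.

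\textbf{Fullness under Hypothesis~\ref{h1}.} The weak exchange condition on \eqref{eq6} makes $\chi_M$ an epi, hence the map $\xi=\adj\circ\chi_M^*$ of \eqref{eq4} is injective on Hom-sets. A morphism $f$ in $\D^\pre$ between $(M,\xi(\phi))$ and $(N,\xi(\psi))$ satisfies $a_2^* f\circ\xi(\phi)=\xi(\psi)\circ a_1^* f$, which by the computation above rewrites as $\xi(f\circ\phi)=\xi(\psi\circ T^a f)$; injectivity of $\xi$ then forces $f\circ\phi=\psi\circ T^a f$, proving fullness.

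\textbf{Bijection on objects -- the main obstacle.} Injectivity of $\xi$ on Hom-sets also gives injectivity of $\xi$ on objects over a fixed $M$; what remains is surjectivity. Given a pre-descent datum $v$ on $M$, I would construct $\phi:T^a M\to M$ by factoring the $((a_2)_*\dashv a_2^*)$-adjoint $\tilde v:(a_2)_* a_1^* M\to M$ of $v$ through $\chi_M$; the factorization is unique because $\chi_M$ is epi, and the converse direction of Proposition~\ref{c1} would then force $\phi$ to be associative, giving $(M,\phi)\in\M^a_\ass$ with $\xi(M,\phi)=(M,v)$. The delicate step -- and the main technical obstacle -- is the existence of this factorization: $\chi_M$ being merely epi is not enough, so one must exploit the cocycle identity defining pre-descent (a relation on $A_3$) together with the weak exchange on \eqref{eq1.2} -- precisely the ingredients that give injectivity of $\rho$ in the proof of Proposition~\ref{c1} -- to conclude that $\tilde v$ coequalizes the appropriate pair of morphisms witnessing the epi nature of $\chi_M$.
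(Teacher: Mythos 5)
Your construction of the functor, the faithfulness argument, and the fullness argument are all correct and agree with the paper's (much terser) proof: the paper likewise gets faithfulness from commutation with the forgetful functors and fullness from the injectivity of the map $\xi$ of \eqref{eq4}, which follows from $\chi_M$ being epi under Hypothesis \ref{h1}; your explicit computation via Lemma \ref{l6}, naturality of $\eta^a$ and the identity $a_1^*a^*=a_2^*a^*$ is exactly the content behind the paper's ``we see immediately that it is also full''.

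The gap is where you say it is, and your proposed repair does not close it. To show that $\xi$ is surjective on objects you must factor the adjoint $\tilde v:(a_2)_*a_1^*M\to M$ of an arbitrary pre-descent datum $v$ through $\chi_M$, and you correctly observe that $\chi_M$ being epi gives only uniqueness of such a factorization, not existence. But your suggested remedy --- that $\tilde v$ ``coequalizes the appropriate pair of morphisms witnessing the epi nature of $\chi_M$'' --- is not an argument that can be carried out from the stated hypotheses: an epimorphism need not be the coequalizer of any pair of morphisms (it need not even be a regular epi), no such pair is supplied by Hypothesis \ref{h1}, and the cocycle identity on $A_3$ together with the weak exchange condition on \eqref{eq1.2} is used in Proposition \ref{c1} only to prove injectivity of $\rho$, which bears on uniqueness, not existence, of the algebra structure. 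For comparison, the paper disposes of this step in one clause, deducing essential surjectivity directly from Proposition \ref{c1}; but that proposition compares an associativity condition on a \emph{given} $\phi\in\M(A_1)(T^aM,M)$ with the pre-descent condition on $\xi(\phi)$, so invoking it already presupposes that the pre-descent datum in hand lies in the image of the map \eqref{eq4} --- which is automatic when $\chi_M$ is invertible (the exchange condition) but not when it is merely epi. So you have put your finger on the one point of the corollary that does not follow formally from the weak exchange condition; as submitted, however, your proposal does not prove the surjectivity on objects, and hence does not establish that $\xi$ is an isomorphism of categories.
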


\begin{proof} Commutation of $\xi$ with the forgetful functors is obvious. This already shows that it is faithful; under Hypothesis  \ref{h1}, it is essentially surjective by Proposition \ref{c1} and we see immediately that it is also full.
\end{proof}

\subsection{The unit condition}\label{s4} We keep the hypotheses and notation of Section \ref{s3}, and  introduce an additional ingredient: a ``diagonal'' morphism $\Delta:A_1\to A_2$ such that $a_1\Delta=a_2 \Delta=1_{A_1}$.

\begin{defn}\label{d3} A \emph{descent datum} on $M$ is a pre-descent datum $v$ such that $\Delta^* v=1_M$. We denote by $\D$ the full subcategory of $\D^\pre$ given by the descent data. 
\end{defn}

Let $\M^a\subset\M^a_\ass$ be the category of $T^a$-algebras. 

\begin{thm}[cf. \protect{\cite[théorème]{br}}]\label{t1}  For all $\phi\in \M(A_1)(T^a M,M)$, we have
\begin{equation}\label{eq4.1}
\Delta^*\xi(\phi) = \phi\circ \eta^a_M.
\end{equation}
In particular, $\xi(\M^a)\subset \D$ and $\xi:\M^a\to \D$ is an isomorphism of categories under Hypothesis \ref{h1}.
\end{thm}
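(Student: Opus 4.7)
The plan is to reduce the equality \eqref{eq4.1} directly to the key lemma (Lemma \ref{l6}) and then to feed the result into Corollary \ref{c1a}, which has already done the heavy lifting for the associativity axiom.

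First, Lemma \ref{l6} gives the explicit formula $\xi(\phi)=a_2^*\phi\circ a_1^*\eta^a_M$. I would apply $\Delta^*$ to both sides and invoke the strict functoriality convention $a\mapsto a^*$ adopted in the introduction, together with the assumed identities $a_1\Delta=a_2\Delta=1_{A_1}$, to obtain
\[
\Delta^*\xi(\phi)=(a_2\Delta)^*\phi\circ (a_1\Delta)^*\eta^a_M=\phi\circ\eta^a_M,
\]
which is exactly \eqref{eq4.1}. No adjoint chase is needed at this stage; the point is precisely that rectifying the pseudofunctor lets us treat $1_{A_1}^*$ as the identity functor on $\M(A_1)$.

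Now for the second assertion: by Definition \ref{d3} an object $(M,\xi(\phi))\in \D^\pre$ lies in $\D$ if and only if $\Delta^*\xi(\phi)=1_M$, and by what we just proved this is equivalent to $\phi\circ\eta^a_M=1_M$, i.e. to the unit axiom of a $T^a$-algebra. Combined with Corollary \ref{c1a}, which under Hypothesis \ref{h1} already provides an isomorphism $\xi:\M^a_\ass\iso \D^\pre$ compatible with the forgetful functors to $\M(A_1)$, this shows that restricting the source to the full subcategory $\M^a\subset\M^a_\ass$ of unital algebras corresponds precisely to restricting the target to the full subcategory $\D\subset\D^\pre$ of descent data, yielding the promised isomorphism $\xi:\M^a\iso \D$.

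I do not anticipate a real obstacle: everything delicate has been absorbed into Proposition \ref{c1} and Corollary \ref{c1a}. The one point requiring care is simply to note that the strict functoriality convention is exactly what makes $\Delta^*$ annihilate the pullbacks along $a_1$ and $a_2$, so that the formula of Lemma \ref{l6} collapses as cleanly as it does; without this convention one would be left with coherence isomorphisms that, while trivially identities up to isomorphism, would clutter the statement \eqref{eq4.1}.
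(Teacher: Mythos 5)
Your proof is correct and follows essentially the same route as the paper: the paper likewise obtains \eqref{eq4.1} by applying $\Delta^*$ to the identity of Lemma \ref{l6} (with $M=N$), and then concludes via Corollary \ref{c1a}. Your write-up merely makes explicit the use of strict functoriality and the relations $a_1\Delta=a_2\Delta=1_{A_1}$, which the paper leaves implicit.
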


\begin{proof}  Suppose  that $M=N$ in Lemma \ref{l6}. Applying $\Delta^*$ to its identity, we get
\eqref{eq4.1}. In particular, if $\phi$ is the action of a $T^a$-algebra then $v=\xi(\phi)$ verifies $\Delta^* v=1_M$. We conclude with Corollary \ref{c1a}. 
\end{proof}

As in \cite[VI.3, Th. 1]{mcl}, we have the Eilenberg-Moore comparison functor
\begin{align}\label{eq1.1}
K^a:\M(A_0)&\to \M^a \\
M_0&\mapsto (a^*M_0,a^* \epsilon^a_{M_0}).\notag
\end{align}

Lemma \ref{l3} yields:

\begin{prop}\label{p2} We have $\xi(a^*\epsilon^a_{M_0})=1_{M_0}$. In other words, in the diagram
\[\xymatrix{
\M(A_0)\ar[r]^{\Psi^a}\ar[rd]_{K^a}&\D\ar[r]^{U^a}&\M(A_1)\\
&\M^a\ar[u]^\xi\ar[ur]_{U^{T^a}}
}\]
the left triangle commutes (as well as the right one, trivially). \qed
\end{prop}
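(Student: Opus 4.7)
The plan is to read Proposition \ref{p2} as a direct unpacking of Lemma \ref{l3} in the special case $M=a^*M_0$. First I would observe that with this choice, the identity $1_{a^*M_0}\in \M(A_1)(a^*M_0,a^*M_0)$ corresponds under the adjunction isomorphism to the counit $\epsilon^a_{M_0}\in \M(A_0)(a_*a^*M_0,M_0)$; applying $a^*$ then yields precisely $a^*\epsilon^a_{M_0}:T^aa^*M_0\to a^*M_0$, so that $\xi(a^*\epsilon^a_{M_0})$ is by construction the image of $1_{a^*M_0}$ under the entire composition \eqref{eq4a}.

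Lemma \ref{l3} says that this composition is simply induced by $a_1^*$. Hence
\[\xi(a^*\epsilon^a_{M_0})=a_1^*(1_{a^*M_0})=1_{a_1^*a^*M_0}=1_{p^*M_0},\]
where the last equality uses the convention adopted in the notation section that $a\mapsto a^*$ is a strict functor, so $a_1^*a^*=(aa_1)^*=p^*$ on the nose (and similarly $a_2^*a^*=p^*$). This settles the first assertion.

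For the left triangle, I would unwind $\Psi^a(M_0)$ as the pair $(a^*M_0,1_{p^*M_0})$, i.e.\ $a^*M_0$ equipped with the tautological descent datum on an inverse image (the verification that this is a descent datum is immediate: $\hat{\phi}_{ij}$ are all identities and $\Delta^*1_{p^*M_0}=1_{a^*M_0}$). Since $K^a(M_0)=(a^*M_0,a^*\epsilon^a_{M_0})$, the computation just made gives $\xi\circ K^a(M_0)=(a^*M_0,1_{p^*M_0})=\Psi^a(M_0)$ on objects; agreement on morphisms is automatic because all three functors act as $a^*$ on underlying arrows in $\M(A_0)$, and $\xi$ commutes with the forgetful functors to $\M(A_1)$ by Corollary \ref{c1a}.

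There is essentially no obstacle here — the entire content is Lemma \ref{l3}. The only point requiring a brief nod is the strict identification $a_1^*a^*=p^*=a_2^*a^*$, which is the reason why the pseudofunctor was rectified to a genuine functor in the notation section.
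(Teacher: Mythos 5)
Your proof is correct and is exactly the argument the paper intends: the proposition is stated with a \qed immediately after the phrase ``Lemma \ref{l3} yields'', and the intended content is precisely your observation that $a^*\epsilon^a_{M_0}$ is the image of $1_{a^*M_0}$ under the first two arrows of \eqref{eq4a}, so that Lemma \ref{l3} gives $\xi(a^*\epsilon^a_{M_0})=a_1^*(1_{a^*M_0})=1_{p^*M_0}$. Your unwinding of the triangle (including the remark that the stated $1_{M_0}$ is shorthand for $1_{p^*M_0}$ via the strictified identification $a_1^*a^*=p^*=a_2^*a^*$) matches the paper's reasoning.
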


\begin{rque}\label{eq4.3} In \cite[3.7]{facets}, Janelidze and Tholen construct a functor from $\D$ to $\M$ (same direction as in \cite{br}) by using the \emph{inverses} of the base change morphisms \eqref{eq0}.
\end{rque}

\begin{rque} In the canonical example \ref{ex5}, a pre-descent datum $v$ satisfies the condition of Definition \ref{d3} if and only if it is invertible (therefore is a descent datum in the classical sense): this follows from \cite[A.1.d pp. 303--304]{gro}. In \emph{loc. cit.}, Grothendieck uses an elegant Yoneda argument. It is an issue to see how this result extends to our more general situation: this is done in the next proposition. I am indebted to the referee for prodding me to investigate this. 

Note that I merely looked for what is necessary to translate Grothen\-dieck's arguments, and not for the greatest generality.
\end{rque}

\begin{prop}\label{p4.1} Let $(A_0,A_1,A_2,A_3,a,a_1,a_2,p_1,p_2,p_3)$ be as in Section \ref{s3}. Let $M\in \M(A_1)$ and let $v\in\M(A_2)(a_1^*M,a_2^* M)$ be a pre-descent datum as in Definition \ref{d2}. Further, let $\Delta$ be as in the beginning of the present section. Consider the following conditions:
\begin{thlist}
\item  $\Delta^*v=1_M$ (i.e. $v$ is a descent datum).
\item  $v$ is invertible.
\end{thlist}
Then:\\
a) $(ii) \Rightarrow (i)$ under one of the following conditions: there exists a morphism $s_1$ (resp. $s_2$) from $A_2$ to $A_3$ such that 
\[p_1s_1= \Delta a_2,\quad p_2s_1=p_3s_1=1\]
(resp.
\[p_1s_2=  p_2s_2=1, \quad p_3s_2=\Delta a_2).\]
b) $(i) \Rightarrow (ii)$ under the following condition: there exists an involution $\sigma$ of $A_2$ 
and a morphism $\Gamma:A_2\to A_3$ such that 
\[p_1\Gamma= \sigma,\quad p_2\Gamma = \Delta a_1,\quad p_3\Gamma = 1_{A_2}.\] 
\end{prop}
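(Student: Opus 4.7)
The plan is to pull back the pre-descent (cocycle) identity
\[p_2^* v = p_1^* v \circ p_3^* v \quad\text{in } \M(A_3)\]
along the auxiliary section morphisms $s_1$, $s_2$, or $\Gamma$ supplied by the hypotheses. Since pullbacks form a strict functor, each such pullback reduces to a clean identity in $\M(A_2)$ or $\M(A_1)$ involving $v$, $\sigma^*v$, or $\Delta^*v$, and the rest is elementary manipulation.

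For part a) via $s_1$, I would apply $s_1^*$ to the cocycle identity and use $p_1 s_1 = \Delta a_2$, $p_2 s_1 = p_3 s_1 = 1_{A_2}$ to obtain
\[v = a_2^*(\Delta^* v) \circ v.\]
Invertibility of $v$ lets me cancel on the right, giving $a_2^*(\Delta^* v) = 1_{a_2^* M}$. Since $a_2^*$ is not assumed faithful, I finish by pulling this last identity back once more along $\Delta$: using $a_2 \Delta = 1_{A_1}$, the left side becomes $\Delta^* v$ and the right side $1_M$. The $s_2$ argument is the mirror image, producing $v = v \circ a_1^*(\Delta^* v)$ and concluding via $a_1 \Delta = 1_{A_1}$.

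For part b), a first observation is that applying the face identities $a_1 p_1 = a_2 p_3$ and $a_2 p_1 = a_2 p_2$ to $\Gamma$ forces $a_1 \sigma = a_2$ and $a_2 \sigma = a_1$; without this the compositions $\sigma^* v \circ v$ and $v \circ \sigma^* v$ would not even be defined in $\M(A_2)$. Applying $\Gamma^*$ to the cocycle identity and using $p_1 \Gamma = \sigma$, $p_2 \Gamma = \Delta a_1$, $p_3 \Gamma = 1_{A_2}$ yields
\[a_1^*(\Delta^* v) = \sigma^* v \circ v,\]
whose left-hand side equals $1_{a_1^* M}$ under hypothesis (i). Thus $\sigma^* v$ is a left inverse of $v$. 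Pulling the same identity back along $\sigma$ and using $\sigma^2 = 1_{A_2}$ yields the dual identity $v \circ \sigma^* v = 1_{a_2^* M}$, so $\sigma^* v$ is also a right inverse, proving invertibility.

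The only delicate points I anticipate are the ``extra pullback along $\Delta$'' at the end of a), which is what avoids any faithfulness assumption on $a_i^*$, and the hidden constraint $a_1 \sigma = a_2$ in b) that makes the compositions meaningful. Apart from these, the whole argument is bookkeeping with strict pullback functors.
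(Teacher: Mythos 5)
Your proof is correct and follows essentially the same route as the paper: apply $s_1^*$ (resp.\ $s_2^*$) to the cocycle identity, cancel $v$, and pull back once more along $\Delta$ for a); apply $\Gamma^*$ and then $\sigma^*$ for b). The only (minor) additions are your explicit check that $a_1\sigma=a_2$ and $a_2\sigma=a_1$ are forced by the face identities, and the only (minor) omission is the paper's remark that in a) right (resp.\ left) cancellability of $v$ already suffices.
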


(In the case of the canonical example \ref{ex5}, we may take for $s_1$ and $s_2$ the partial diagonals, for $\sigma$ the exchange of factors and for $\Gamma$ the graph of $a_1$, given in formula by $(\alpha_1,\alpha_2)\mapsto (\alpha_1,\alpha_2,\alpha_1)$.)

\begin{proof} The predescent condition on $v$ is
\begin{equation}\label{eq4.2}
p_2^*v=p_1^*v\circ p_3^* v.
\end{equation}

a) Applying $s_1^*$ to \eqref{eq4.2}, we get
\[v=a_2^*\Delta^*v\circ v\]
hence $a_2^*\Delta^*v=1_{A_2}$ and
\[\Delta^*v=\Delta^*a_2^*\Delta^*v=1_{A_1}.\]

Same reasoning with $s_2$, \emph{mutatis mutandis}. Note that with $s_1$ (\emph{resp}. $s_2$), it suffices to assume that $v$ is right (\emph{resp}. left) cancellable.

b) Applying $\Gamma^*$ to \eqref{eq4.2}, we get
\[1_{A_2} =a_1^*\Delta^* v = \sigma^* v\circ v. \]

Applying now $\sigma^*$, we also get $v\circ \sigma^* v = 1_{A_2}$.
\end{proof}

\subsection{A supplement} Recall \cite[Ex. 8.7.8]{sga4.1} that a category is called \emph{Karoubian} if any idempotent endomorphism has an image.

\begin{prop}\label{p4} Let $a^*$ be fully faithful and $\M(A_0)$ Karoubian. Let $\phi:T^a M\to M$ satisfy the identity $\phi\circ \eta^a_{M}=1_{M}$. Then there exists $M_0\in \M(A_0)$ and an isomorphism $\nu:M\iso a^*M_0$ such that $\phi=\nu^{-1}\circ a^*\epsilon^a_{M_0 }\circ  T^a \nu$.
\end{prop}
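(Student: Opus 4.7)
The plan is to exhibit $M$ as the image of an idempotent endomorphism of $T^aM$, push that idempotent down to $\M(A_0)$ using the fully-faithfulness of $a^*$, split it there by the Karoubian hypothesis, and transport the resulting isomorphism back up.

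First, the hypothesis $\phi\circ\eta^a_M=1_M$ forces $e:=\eta^a_M\circ\phi$ to be idempotent in $\End(T^aM)$, with $(\eta^a_M,\phi)$ providing a splitting whose image is $M$. Since $a^*$ is fully faithful, $e=a^*\tilde e$ for a unique (and hence also idempotent) $\tilde e\in\End(a_*M)$. The Karoubian hypothesis then splits $\tilde e$ as $\tilde e=i\circ p$ with $p\colon a_*M\to M_0$, $i\colon M_0\to a_*M$, and $p\circ i=1_{M_0}$. Applying $a^*$ produces a second splitting of $e$, and uniqueness of splittings of an idempotent up to canonical isomorphism gives the candidate
\[\nu:=a^*p\circ\eta^a_M\colon M\iso a^*M_0,\qquad \nu^{-1}=\phi\circ a^*i,\]
which by construction satisfies $\nu\circ\phi=a^*p\circ\eta^a_M\circ\phi=a^*p\circ e=a^*p$.

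The remaining task is to verify $\nu\circ\phi=a^*\epsilon^a_{M_0}\circ T^a\nu$. Fully-faithfulness of $a^*$ is equivalent to $\epsilon^a\colon a_*a^*\Rightarrow\Id$ being an isomorphism; together with the triangle identity $\epsilon^a_{a_*M}\circ a_*\eta^a_M=1$, this gives $a_*\eta^a_M=(\epsilon^a_{a_*M})^{-1}$. Naturality of $\epsilon^a$ applied to $p$ then collapses $a_*\nu=a_*a^*p\circ a_*\eta^a_M$ to $\epsilon^a_{M_0}^{-1}\circ p$, so that $a^*\epsilon^a_{M_0}\circ T^a\nu=a^*p$, matching $\nu\circ\phi$ as required.

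The one genuinely creative move is the initial observation that $\eta^a_M\circ\phi$ is idempotent (this is what lets Karoubianness enter at all); everything that follows is adjunction bookkeeping, the only pitfall being to keep track of which of $\eta^a,\epsilon^a$ is unit and which is counit under the convention $a_*\dashv a^*$. Note as a side effect that the conclusion forces $\phi$ to be an honest $T^a$-algebra structure, even though associativity was never assumed: it is recovered from $\phi=\nu^{-1}\circ a^*\epsilon^a_{M_0}\circ T^a\nu$.
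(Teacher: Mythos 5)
Your proof is correct and follows essentially the same route as the paper: form the idempotent $e=\eta^a_M\phi$, descend it to an idempotent $\tilde e$ of $a_*M$ via full faithfulness of $a^*$, split $\tilde e$ in the Karoubian category $\M(A_0)$, and transport the splitting back through $a^*$ to get $\nu$. The only difference is cosmetic and lies in the final verification, where you compute $T^a\nu=(a^*\epsilon^a_{M_0})^{-1}\circ a^*p$ directly from the invertibility and naturality of $\epsilon^a$, whereas the paper argues via uniqueness of the epi--mono factorisation of an idempotent; both are routine adjunction chases.
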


\begin{proof} Let $e$ denote the idempotent $\eta^a_{M}\phi\in \End_{\M(A_1)}(T^a M)$. By hypothesis, $e=a^*\tilde e$ where $\tilde e$ is an idempotent of $\End_{\M(A_0)}(a_*M)$, with image $M_0$. Then $a^*M_0$ is isomorphic to the image $M$ of $e$ via a morphism $\nu$ as in the statement, such that
\[\nu\circ \phi=a^*\pi,\quad a^*\iota\circ \nu = \eta^a_{M}\]
where $\iota\pi$ is the epi-mono factorization of $\tilde e$.

To finish, it is enough to see that $a^*\pi=a^*\epsilon^a_{M_0}\circ T^a \nu$.
But we also have
\[\eta^a_{a^*M_0}\circ \nu = T^a \nu \circ \eta^a_{M}=T^a \nu \circ a^*\iota\circ \nu\]
hence $\eta^a_{a^*M_0}=T^a \nu \circ a^*\iota$. This concludes the proof, since $\eta^a_{a^*M_0}\circ a^*\epsilon^a_{M_0}$ is the epi-mono factorisation of the idempotent of $\End(T^a a^*M_0 )$ with image $a^*M_0$.
\end{proof}

We thus obtain the following complement:

\begin{cor}\label{c2} Assume Hypothesis \ref{h1}, and also that $a^*$ is fully faithful and $\M(A_0)$ Karoubian. Then \\
a) every unital $T^a$-algebra is associative;\\
b) $K^a$ is essentially surjective.\qed
\end{cor}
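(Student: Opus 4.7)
Both parts follow essentially immediately from Proposition \ref{p4}; the Karoubian and fully faithful assumptions do all the work, while Hypothesis \ref{h1} serves only to place the conclusion inside the framework of Theorem \ref{t1} (it is not logically needed for the statements a) and b) themselves).

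For part b), I would take any $(M,\phi)\in\M^a$. Being a $T^a$-algebra, $\phi$ is in particular unital, so Proposition \ref{p4} applies and yields $M_0\in\M(A_0)$ together with an isomorphism $\nu:M\iso a^*M_0$ satisfying
\[\phi = \nu^{-1}\circ a^*\epsilon^a_{M_0}\circ T^a\nu.\]
This equation is precisely the statement that $\nu$ is an isomorphism of $T^a$-algebras between $(M,\phi)$ and $K^a(M_0)=(a^*M_0,a^*\epsilon^a_{M_0})$, so $(M,\phi)$ lies in the essential image of $K^a$.

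For part a), I would repeat the argument, this time starting only from a unital $T^a$-algebra that is not yet assumed to be associative; this is permitted because Proposition \ref{p4} only invokes the unit identity $\phi\circ\eta^a_M=1_M$. It again produces $(M_0,\nu)$ with $\phi$ conjugate via $\nu$ to the free action $a^*\epsilon^a_{M_0}$. That free action is associative by the standard monad identity $a^*\epsilon^a_{M_0}\circ T^a(a^*\epsilon^a_{M_0})=a^*\epsilon^a_{M_0}\circ \mu^a_{a^*M_0}$, and associativity of an action is clearly preserved under conjugation by an isomorphism of the underlying object; hence $\phi$ is itself associative.

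I do not foresee any real obstacle here: the entire substance is already packaged in Proposition \ref{p4}, and what remains is the formal observation that its conclusion expresses ``$\nu$ is an algebra isomorphism to the free $T^a$-algebra $K^a(M_0)$''.
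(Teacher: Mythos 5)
Your proof is correct and follows exactly the route the paper intends: Corollary \ref{c2} is stated with no written proof precisely because both parts are immediate from Proposition \ref{p4}, read as saying that $\nu$ is an isomorphism of algebras onto $K^a(M_0)=(a^*M_0,a^*\epsilon^a_{M_0})$, whose action is associative by naturality of $\epsilon^a$. Your side remark that Hypothesis \ref{h1} is not logically needed for a) and b) as literally stated is also accurate; it is carried along only so that, via Theorem \ref{t1}, the essential surjectivity of $K^a$ transfers to that of $\Psi^a:\M(A_0)\to\D$.
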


Can one weaken the full faithfulness assumption in this corollary? The following lemma does not seem sufficient:

\begin{lemme}\label{l4} Let $M,N\in \M(A_1)$. Then the map 
\[a^*:\M(A_0)(a_*M,a_*N)\to \M(A_1)(T^a M,T^a N)\] 
has a retraction $r$ given by $r(f)=\epsilon^a_{a_*N}\circ a_*f\circ a_*\eta^a_{M}$. More generally, we have an identity of the form $r(a^*g \circ f) = g\circ r(f)$.  
\end{lemme}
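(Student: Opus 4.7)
The plan is to verify both claims by direct adjoint bookkeeping. The only facts needed are the naturality of the counit $\epsilon^a$ and the triangle identity for the adjunction $a_*\dashv a^*$, which reads $\epsilon^a_{a_*X}\circ a_*\eta^a_X=1_{a_*X}$ for all $X\in\M(A_1)$ (one must keep in mind that $a_*$ is the \emph{left} adjoint, so $\eta^a\colon\Id\Rightarrow a^*a_*=T^a$ and $\epsilon^a\colon a_*a^*\Rightarrow\Id$).

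First I would prove the more general identity $r(a^*g\circ f)=g\circ r(f)$, say for $f\colon T^aM\to T^aN$ and $g\colon a_*N\to a_*N'$. Unfolding the definition of $r$,
\[
r(a^*g\circ f)=\epsilon^a_{a_*N'}\circ a_*(a^*g)\circ a_*f\circ a_*\eta^a_M,
\]
and naturality of $\epsilon^a$ applied to $g$ rewrites $\epsilon^a_{a_*N'}\circ a_*a^*g = g\circ\epsilon^a_{a_*N}$; the result collapses to $g\circ r(f)$.

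Next I would specialize: taking $f=1_{T^aM}$, one computes $r(1_{T^aM})=\epsilon^a_{a_*M}\circ a_*\eta^a_M=1_{a_*M}$ by the triangle identity, so the general identity immediately yields $r(a^*g)=g\circ 1_{a_*M}=g$ for every $g\colon a_*M\to a_*N$, i.e.\ $r$ is a retraction of $a^*$.

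There is no real obstacle here; the only point to watch is the orientation of the unit and counit, since reversing them would produce a retraction of a different map. Once the conventions are fixed, the whole statement is a one-line naturality argument followed by one triangle identity.
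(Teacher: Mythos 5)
Your proof is correct and follows exactly the paper's own route: establish $r(a^*g\circ f)=g\circ r(f)$ by naturality of $\epsilon^a$, then set $f=1_{T^aM}$ and conclude with the triangle identity $\epsilon^a_{a_*M}\circ a_*\eta^a_M=1_{a_*M}$. The only difference is that you spell out the triangle identity explicitly where the paper leaves it implicit.
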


\begin{proof} For $f:T^a M\to T^a N$ and $g:a_*N\to a_*P$, we have
\[r(a^*g\circ f) = \epsilon^a_{a_*P}\circ a_*a^*g\circ a_*f\circ a_*\eta^a_{M}=  g\circ\epsilon^a_{a_*N}\circ a_*f\circ a_*\eta^a_{M}=g\circ r(f).\]

Taking $f=1_{T^a M}$, we obtain that $r$ is a retraction.
\end{proof}

\subsection{Appendix: a case where the exchange condition is verified}

Let $\sA$ be a category. Take for $\A$ the category of presheaves of sets on $\sA$. Write $\int A$ for the category associated to $A\in \A$ by the Grothendieck construction \cite[\S 8]{sga1}. Recall its definition in this simple case: the objects of $\int A$ are pairs  $(X,a)$ where $X\in \sA$ and $a\in A(X)$, and a morphism from $(X,a)$ to $(Y,b)$ is a morphism $f\in \sA(X,Y)$ such that $A(f)(x)=y$.  

Let $\sC$ be another category. We take for $\M$ the fibred category of representations of $\A$ in $\sC$: for $A\in \A$, an object of $\M(A)$ is a functor from $\int A$ to $\sC$. For all $a\in\A(A_1, A_0)$ we have an obvious pull-back functor $a^*:\M(A_0)\to \M(A_1)$, which has a left adjoint $a_*$ (direct image) given by the usual colimit if $\sC$ is cocomplete. We can then ask whether the exchange condition is true for Cartesian squares of $\sA$. 

\begin{prop}\label{p3} This is the case if $\sC$ is the category of sets $\Ens$, and more generally if $\sC$ admits a forgetful functor $\Omega:\sC\to \Ens$ with a left adjoint $L$ such that $(L,\Omega)$ satisfies the conditions of Beck's theorem \cite[VI.7, Th. 1]{mcl}.
\end{prop}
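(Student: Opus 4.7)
The plan is to first settle the case $\sC=\Ens$ by an explicit colimit calculation, then bootstrap to arbitrary $\sC$ using Beck's theorem.

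\textbf{The case $\sC=\Ens$.} For $F\colon \int A_1\to \Ens$, the direct image $a_*F$ is the left Kan extension along $\int a\colon \int A_1\to \int A_0$; pointwise, $(a_*F)(X,a_0)$ is the colimit of $F(Y,b)$ over triples $((Y,b),f\colon Y\to X)$ with $A_0(f)(a_0)=a(b)$. So $(a^*a_*F)(X,a_1')$ is indexed by such triples with $A_0(f)(a(a_1'))=a(b)$ and takes value $F(Y,b)$, while $((a_2)_*a_1^*F)(X,a_1')$ is indexed by $((Z,c),g\colon Z\to X)$ with $c\in A_2(Z)$ and $A_1(g)(a_1')=a_2(c)$, taking value $F(Z,a_1(c))$. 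Writing $c=(c_1,c_2)$ via $A_2=A_1\times_{A_0}A_1$, the constraint forces $c_2=A_1(g)(a_1')$, after which $a(c_1)=a(c_2)=A_0(g)(a(a_1'))$ exhibits $(Z,c_1,g)$ as an object of the indexing category for $(a^*a_*F)(X,a_1')$ with matching value $F(Z,c_1)$. A direct verification shows the resulting bijection is $\chi_F$.

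\textbf{Reduction via Beck.} By Beck's theorem $\sC\simeq\Ens^T$ for $T=\Omega L$, and pointwise this gives $\M(A)\simeq \sM(A)^T$, where $\sM(A)$ denotes the category of $\Ens$-valued functors on $\int A$, with $L$ and $\Omega$ applied objectwise. The pullbacks $a^*$ and $a_1^*$ are precomposition, so they commute objectwise with $L$ and preserve all colimits; the direct images $a_*$ and $(a_2)_*$ preserve all colimits as left adjoints, and combined with $L$'s preservation of colimits this yields canonical identifications $a_*L\iso L\bar a_*$ (and similarly for $(a_2)_*$), where $\bar{(\cdot)}$ denotes the set-valued analogue. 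A chase of units and counits then gives $\chi_{LF}=L\bar\chi_F$, which is an isomorphism by the previous step.

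\textbf{From free algebras to general $M$.} Every $M\in\M(A_1)$ sits in Beck's canonical $\Omega$-split (hence reflexive) coequalizer of free objects $L\Omega L\Omega M\rightrightarrows L\Omega M\to M$. Both composites $(a_2)_*a_1^*$ and $a^*a_*$ preserve this colimit, the pullbacks as pointwise functors and the direct images as left adjoints; so the naturality of $\chi$ propagates the isomorphisms from $L\Omega M$ and $L\Omega L\Omega M$ to $M$ itself. The main obstacle is the middle step: verifying carefully that $\chi_{LF}$ identifies with $L\bar\chi_F$ under the canonical comparisons is a routine but delicate adjunction chase.
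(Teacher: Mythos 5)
Your proof is correct and follows essentially the same route as the paper: an explicit colimit computation for $\sC=\Ens$ (the paper tests $\chi$ on representable functors and finds an initial discrete subcategory of the comma category, whereas you identify the two comma categories directly for arbitrary $F$ --- a cosmetic difference), followed by the same two-stage Beck reduction, namely $\chi_{LF}\cong L\bar\chi_F$ for free objects and then propagation along the canonical $\Omega$-split coequalizer presentation using cocontinuity of pullbacks and direct images. The one step you flag as delicate, the identification of $\chi_{LF}$ with $L\bar\chi_F$ under the canonical comparisons, is elided at the same level of detail in the paper's own proof.
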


\begin{proof}
First suppose $\sC=\Ens$; to verify that \eqref{eq0} is a natural isomorphism, it is enough to test it on representable functors. Consider Diagram \eqref{eq5} again. For $(c,\gamma)\in \int A_1$ and $(d,\delta)\in \int A_1$ (with $c,d\in \sA$ and $\gamma\in A_1(c)$ , $\delta\in A_1(d)$), we have
\begin{multline*}
T^a y(c,\gamma)(d,\delta)=a^*y(c,a(\gamma))(d,\delta)=y(c,a(\gamma))(d,a(\delta))\\
=\{\phi\in \sA(d,c)\mid  \phi^*a(\gamma)=a(\delta)\}
\end{multline*}
and
\begin{align*}
(a_2)_*a_1^*y(c,\gamma)(d,\delta)&=\colim_{(e,\eta)\in (d,\delta)\downarrow a_2}a_1^*y(c,\gamma)(e,\eta)\\
&=\colim_{(e,\eta)\in (d,\delta)\downarrow a_2}y(c,\gamma)(e,a_1(\eta))\\
&=\colim_{(e,\eta)\in (d,\delta)\downarrow a_2}\{\psi\in \sA(e,c)\mid \psi^*\gamma=a_1(\eta)\}.
\end{align*}

We have
\[ (d,\delta)\downarrow a_2 = \{(e,\eta,\eta_2,\theta)\in \sA\times A_1(e)\times_{A_0(e)} A_1(e)\times \sA(d,e)\mid  \theta^*\eta_2=\delta\}.\]

This category has the initial set $\{(d,\eta_1,\delta,1_d)\mid a(\eta_1)=a(\delta)\}$, so
\begin{multline*}
(a_2)_*a_1^*y(c,\gamma)(d,\delta)=\coprod_{\{(\eta_1\in A_1(d)\mid a(\eta_1)=a(\delta)\}}\{\phi\in \sA(d,c)\mid \phi^*\gamma=\eta_1\}\\
=\{\phi\in \sA(d,c)\mid a(\phi^*\gamma)=a(\delta)\}
\end{multline*}
and the map $(a_2)_*a_1^*y(c,\gamma)(d,\delta)\to (a_2)_*(a^{ 12})^*y(c,\gamma)(d,\delta)$ is clearly equal to the identity.

\emph{General case}: let us write more precisely $\M^\sC(A)=\Cat(\int A,\sC)$. The functors $L$ and $\Omega$ induce pairs of adjoint functors (same notation)
\[L:\M^{\Ens}(A)\leftrightarrows \M^\sC(A):\Omega.\]

These two functors commute with pull-backs; as $L$ is a left adjoint, it also commutes with direct images. Therefore, in the above situation, the base change morphism $\chi_M:(a_2)_*a_1^*M\to T^a M $ is an isomorphism when $M\in \M^\sC(A_1)$ is of the form $L X$ for $X\in \M^{\Ens}(A_1)$. For any $M$, we have its canonical presentation \cite[(5) p. 153]{mcl}
\begin{equation}\label{eq10}
(L\Omega)^2 M\rightrightarrows L\Omega M\to M 
\end{equation}
whose image by $\Omega$ is a split coequaliser (\emph{loc. cit.}). Given the hypothesis that $\Omega$ creates such coequalisers, \eqref{eq10} is a coequaliser. Since pull-backs are cocontinuous, as well as direct images (again, as left adjoints), \eqref{eq10} remains a coequaliser after applying the functors $(a_2)_*(a ^{12})^*$ and $T^a $. Finally, a coequaliser of isomorphisms is an isomorphism.
\end{proof}

\begin{exs}[for $\sC$] Varieties (category of groups, abelian groups, rings\dots): \cite[VI.8, Th. 1]{mcl}.
\end{exs}

\begin{ex}[for $\sA$] \label{ex3} The category with one object $\underline{G}$ associated with a group $G$: then $\A$ is the category of $G$-sets. Let us take for $\sC$ the category of $R$-modules where $R$ is a commutative ring. If $A\in \A$ is $G$-transitive, $\int A$ is a connected groupoid, which is equivalent to $\underline{H}$ for the stabilizer $H$ of any element of $A$; thus, $\M(A)$ is equivalent to $\Rep_R(H)$. If $a:A_1\to A_0$ is the morphism of $\A$ defined by an inclusion $K\subset H\subset G$ ($A_1=G/K$, $A_0=G/H$), then $ a^*$ is restriction from $H$ to $K$ and $a_*$ is induction $V\mapsto RH\otimes_{RK} V$. From Proposition \ref{p3}, we thus recover conceptually the Mackey formula of \cite[7.3, Prop. 22]{serre}, proven ``by hand'' in \emph{loc. cit.}
\end{ex}

\end{document}